\newtheorem{theor}{Theorem}
\theoremstyle{definition}
\newtheorem{define}{Definition}
\newtheorem{state}[theor]{Proposition}
\newtheorem{prop}[theor]{Proposition}
\newtheorem{cor}[theor]{Corollary}
\newtheorem{ex}{Example}
\theoremstyle{remark}
\newtheorem{rem}{Remark}
\newcommand{\BBR}{\mathbb{R}}\newcommand{\BBC}{\mathbb{C}}
\newcommand{\BBN}{\mathbb{N}}
\newcommand{\BBS}{\mathbb{S}}
\newcommand{\BBZ}{\mathbb{Z}}\newcommand{\BBE}{\mathbb{E}}
\newcommand{\cA}{{{\EuScript A}}}
\newcommand{\dd}{\partial}
\newcommand{\Id}{{\mathrm d}}
\newcommand{\ID}{{\mathrm D}}
\DeclareMathOperator{\Alt}{Alt}
\DeclareMathOperator{\Hom}{Hom}
\newcommand{\W}[3]{\mathop{{W}}\left(#1,#2,#3\right)}
\newcommand{\by}[1]{\textrm{{#1}}}
\newcommand{\jour}[1]{\textit{{#1}}}
\newcommand{\vol}[1]{\textbf{{#1}}}
\newcommand{\book}[1]{\textit{{#1}}}
\begin{document}\large \pagestyle{plain}

\title{Wronskians as 
$N$-ary brackets in finite\/-\/dimensional analogues of~$\mathfrak{sl}(2)$}

\author{Arthemy V Kiselev}

\affil{Bernoulli Institute for Mathematics, Computer Science and Artificial Intelligence, University of Groningen, P.O. Box~407, 9700 AK Groningen, The Netherlands}

\email{a.v.kiselev@rug.nl}

\begin{abstract}\noindent%
The Wronskian determinants (for coefficients of higher\/-\/order differential operators on the affine real line or circle
) satisfy the table of Jacobi\/-\/type quadratic identities for strong homotopy Lie algebras --\,i.e.\ for a particular case of $L_\infty$-\/deformations\,-- for the Lie algebra of vector fields on that one\/-\/dimensional affine manifold.
We show that the standard realisation of~$\mathfrak{sl}(2)$ by quadratic\/-\/coefficient vector fields is the bottom structure in a sequence of finite\/-\/dimensional polynomial algebras~$\Bbbk_N[x]$ with the Wronskians as $N$-ary brackets; the structure constants are calculated explicitly.\\[0.5pt]
\textbf{Key words:} Wronskian determinant, $N$-ary bracket, $L_\infty$-\/algebra, strong homotopy Lie algebra, $\mathfrak{sl}(2)$, Witt algebra, Vandermonde determinant.
\end{abstract}

\large
\section{Introduction}
\large
Let us view Lie algebras $\mathfrak{g}=(V, [{\cdot},{\cdot}])$ as a base class of structures which we seek to generalise in a natural way:
(\textit{i}) the vector space~$V$ (over a chosen field~$\Bbbk=\BBR$ or~$\BBC$ of characteristic zero) can be enlarged;
(\textit{ii}) the binary Lie bracket $[{\cdot},{\cdot}]$ can be replaced with $N$-ary antisymmetric multi\/-\/linear bracket(s) satisfying (collections of)
(\textit{iii}) suitable variants of $N$-ary Jacobi\/-\/type quadratic identities.
In retrospect, the problem and process of such enlargement sheds more light on the nature and properties of the initially taken objects and structures.

\begin{ex}\label{ExSimpleLie}
The Lie algebra $\mathfrak{sl}(2,\BBC)$ is the main example of a (semi)simple complex Lie algebra; its structure is encoded by the root system~$\mathsf{A}_1$ in~$\BBE^1$. In the Chevalley basis $\{e,f,h\}$ for~$\mathfrak{sl}(2)$, the Lie bracket is determined by the relations
\begin{equation}\label{EqRelSL2}
[h,e]=2e,\qquad [h,f]=-2f,\qquad [e,f]=h.
\end{equation}
Larger irreducible root systems in higher\/-\/dimensional real Euclidean spaces~$\BBE^r$ exhaustively classify all 
simple complex Lie algebras of higher ranks~$r$; their list is $\mathsf{A}_r$ ($r\geqslant 1$), 
$\mathsf{B}_r$ ($r\geqslant 2$), $\mathsf{C}_r$ ($r\geqslant 3$), 
$\mathsf{D}_r$ ($r\geqslant 4$), and the exceptional five: $E_6$, $E_7$, $E_8$, $F_4$, and~$G_2$, see~\cite{Humphreys}; 
allowing imaginary simple root vectors, we arrive 
at the Borcherds\/--\/Kac\/--\/Moody~algebras.
\end{ex}
   
Independently, the binary 
bracket $[{\cdot},{\cdot}]$ can be deformed to a formal series of structures,
\[
[{\cdot},{\cdot}] \longmapsto \nabla = [{\cdot},{\cdot}] + \nabla_3 +\ldots+\nabla_m+\ldots,
\]
where $[{\cdot},{\cdot}] \equiv \nabla_2$ and for each $m\geqslant 2$, the $m$-linear term~$\nabla_m$ is totally antisymmetric w.r.t. its $m$~arguments from the underlying vector space~$V$. 
The original Lie bracket~$\nabla_2$ in the algebra $\mathfrak{g}=(V,\nabla_2)$ satisfied the Jacobi identity,
\[
\tfrac{1}{1!\cdot 2!} \sum_{\tau\in S_3} (-)^\tau\: \nabla_2\bigl( \nabla_2 \bigl( v_{\tau(1)}, v_{\tau(2)} \bigr), v_{\tau(3)} \bigr) = 0,
\]
for any elements $v_1,v_2,v_3\in V$.
A natural quadratic (w.r.t.\ the structure~$\nabla$) Jacobi\/-\/type identity for the deformed structure $\nabla=\nabla_2+\ldots$ is $\nabla[\nabla]=0$,
meaning that for every $m$-tuple $v_1\otimes v_2\otimes\ldots\otimes v_m$ of $v_j\in V$ the expansion of inner-{} and outer copy of~$\nabla$ by linearity over~$\Bbbk$ produces the chain of partial (at $m\geqslant 3$) identities of the form
\begin{multline*}
\sum_{\tau\in S_m} (-)^\tau \Bigl[ 
  \nabla_2\bigl( \nabla_{m-1} \bigl( v_{\tau(1)},\ldots,v_{\tau(m-1)}\bigr),v_{\tau(m)} \bigr) + \ldots {} \\
  {}+ \nabla_k\bigl( \nabla_{m+1-k} \bigl( v_{\tau(1)},\ldots,v_{\tau(m+1-k)}\bigr),v_{\tau(m+2-k)},\ldots, v_{\tau(m)} \bigr) + \ldots {} \\
  {}+ \nabla_{m-1}\bigl( \nabla_{2} \bigl( v_{\tau(1)},v_{\tau(2)}\bigr),
v_{\tau(3)},\ldots, v_{\tau(m)} \bigr)
  \Bigr] = 0.
\end{multline*}
This is the infinite chain (as integer $m$~starts at~$3$ and increases) of Jacobi identities for the $L_\infty$-deformation of the bracket $[{\cdot},{\cdot}]=\nabla_2$ in the Lie algebra $\mathfrak{g}=(V,[{\cdot},{\cdot}])$, see~\cite{SchlessingerStasheff1985} and~\cite{KontsevichSoibelmanAinfty}.
Of particular interest is the case when --\,in each $m$th summand of the above chain of identities\,-- all the quadratic terms vanish separately, i.e.\ whenever the $N$-ary operations~$\nabla_N$ at~$N\geqslant 2$ in the $L_\infty$-structure~$\nabla$ are such that $\nabla_k[\nabla_\ell]=0$ for all $k,\ell\geqslant 2$.
We shall study these strong homotopy deformations of the Lie bracket $[{\cdot},{\cdot}]$, see~\cite{LadaStasheff1993}, the tail components~$\nabla_j$ now satisfying the table of identities (at $k,\ell\geqslant 2$),
\[
\sum_{\tau\in S_{\ell,k-1}} (-)^\tau\: 
  \nabla_k\bigl( \nabla_{\ell} \bigl( v_{\tau(1)},\ldots,v_{\tau(\ell)}\bigr),v_{\tau(\ell+1)},\ldots, v_{\tau(k+\ell-1)} \bigr) = 0,
\]
where the sums are conveniently taken over the sets 
of $(\ell,k-1)$-unshuffles $\tau\in S_{k+\ell-1}$ such that $\tau(1) <\ldots< \tau(\ell)$ and $\tau(\ell+1)<\ldots<\tau(k+\ell-1)$; passing from the entire group of permulations $S_{k+\ell-1}$ to its subset 
of unshuffles, we divide both sides of the identity
$\nabla_k[\nabla_\ell]=0$ by $(k-1)!\ell!$ occurring from the alternation of arguments $v_j$ strictly within the totally antisymmetric brackets~$\nabla_k$ and~$\nabla_\ell$, respectively.

\smallskip
{\textbf{Research problem.}}\quad
We are interested in finding a natural source of strong homotopy Lie structures $\nabla_k$, $k\geqslant2$, that would deform 
the Lie algebra~$\mathfrak{sl}(2)$. Secondly, we want to find a class of finite\/-\/dimensional vector spaces~$V_N$ such that at each $N\geqslant 2$, the $N$-ary bracket~$\nabla_N$ does restrict to~$V_N$, making it a finite\/-\/dimensional Schlessinger\/--\/Stasheff Lie algebra~$(V_N,\nabla_N)$.

\smallskip
To this end, let us consider the quadratic\/-\/coefficient realisation
$\varrho\colon \mathfrak{sl}(2) \to \ID_1(\BBR)$ of the Lie algebra $\mathfrak{sl}(2)$ in the space of vector fields on the line~$\BBR$ with global affine coordinate~$x$; this standard realisation is given by the formula\footnote{\label{FootVFtoCourant}%
Independently from our construction of two classes 
of strong homotopy Lie algebras, one can start from this vector field realisation of~$\mathfrak{sl}(2)$ in~$\Gamma(T\BBR^1)$ --\,or of higher\/-\/rank semisimple Lie algebras\,-- and study their enlargements to Courant algebra structures on $\Gamma(TM\oplus T^*M)$ over base manifolds~$M$ (e.g., $M=\BBS^1$): the Lie bracket $[{\cdot},{\cdot}]$ of vector fields is then supplemented with the new rules to commute vector fields with differential $1$-forms and similarly, commute two differential $1$-form arguments, see~\cite{CourantAlgebroids} and references therein.}
\begin{equation}\label{EqQuadraticCoeffsSL2}
\varrho(e) = 1\cdot \dd/\dd x,\qquad
\varrho(h) = -2x\cdot \dd/\dd x,\qquad
\varrho(f) = -x^2\cdot \dd/\dd x.
\end{equation}
One readily verifies the standard commutation relations from Eq.~\eqref{EqRelSL2}; our choice of sign in the commutator is $\bigl[\vec{X},\vec{Y}\bigr] = \bigl[\vec{X}(Y) - \vec{Y}(X) \bigr]\cdot \dd/\dd x$ for $\vec{X} = X(x)\cdot \dd/\dd x$ and $\vec{Y} = Y(x)\cdot\dd/\dd x$.
In the commutator $[{\cdot},{\cdot}]$ on~$\ID_1(\BBR^1)$ we recognise the Wronskian determinant of two coefficients:
\[
\bigl[\vec{X},\vec{Y}\bigr](x) = \det\begin{pmatrix} X & Y \\ X' & Y' \end{pmatrix}(x)\cdot \frac{\dd}{\dd x} = W^{0,1}(X,Y)(x)\cdot \frac{\dd}{\dd x}.
\]
Of course, the commutator $[{\cdot},{\cdot}]$ of vector fields does satisfy the Jacobi identity,
\[
\tfrac{1}{2} \sum_{\tau\in S_3} (-)^\tau\: W^{0,1}\bigl(
  W^{0,1}(X_{\tau(1)},X_{\tau(2)}), X_{\tau(3)} \bigr)\cdot \dd/\dd x=0,
\]
for any vector fields $\vec{X}_j = X_j(x)\,\dd/\dd x$ with twice defferentiable coefficients~$X_j(x)$ on~$\BBR$.

Viewing the Wronskian determinants as ($N\geqslant 2$)-ary brackets for coefficients of higher\/-\/order differential operators on the affine line, we presently describe a class of finite\/-\/dimensional Schlessinger\/--\/Stasheff Lie algebras --- such that this class incorporates the standard realisation $\varrho\colon \mathfrak{sl}(2) \to \ID_1(\BBR)$ with quadratic polynomials in Eq.~\eqref{EqQuadraticCoeffsSL2}.

\section{Wronskians as $N$-ary brackets}\label{SecWronsk1DBrackets}\noindent%
Consider the associative algebra $\ID_*(\BBS^1)$ of differential operators of nonnegative integer orders on the circle~$\BBS^1$ (or on the other connected one\/-\/dimensional affine real manifold $M^1=\BBR^1_{\text{aff}}$). The assumption that allowed coordinate transformations are affine makes well defined the subspaces $\ID_p(M^1)$ of differential operators of strict order~$p$.

\begin{define}\label{DefAltNBr}
Take $N=2p$ and let $w_1,\ldots,w_N\in\ID_p(M^1)$, so that locally we have $w_j=w_j(x)\cdot\dd_x^p$. By definition, put
\begin{equation}\label{EqAltBrOp}
\bigl[w_1,\ldots,w_N\bigr]_N \mathrel{{:}{=}} \Alt\bigl(w_1,\ldots,w_N\bigr) =
  \sum_{\sigma\in S_N} (-)^\sigma\: w_{\sigma(1)}\circ \ldots \circ w_{\sigma(N)},
\end{equation}
where the r.-h.s.\ is the alternated associative composition of operators.
\end{define}

In the same way as the commutator of two vector fields is again a vector field, we prove

\begin{theor}\label{ThOrderPreserved}
The subspace $\ID_p(M^1)$ of differential operators $w_j=w_j(x)\cdot \dd_x^p$ of strict order~$p\in\BBN$ is closed under the alternated composition $[{\cdot},\ldots,{\cdot}]_{N=2p}$ of twice as many arguments $w_1,\ldots,w_N\in\ID_p(M^1)$.
Moreover, the structure constants are explicit\textup{:}
\begin{equation}\label{EqBrWr}
\bigl[w_1(x)\,\dd_x^p,\ldots,w_N(x)\,\dd_x^p\bigr]_N =
  W^{0,1,\ldots,N-1}(w_1,\ldots,w_N)\cdot \dd_x^p,
\end{equation}
where $W^{0,1,\ldots,N-1} = \mathbf{1}\wedge \dd_x\wedge\ldots\wedge \dd_x^{N-1}$ is the Wronskian determinant of $N$~arguments in one independent variable~$x$.
\end{theor}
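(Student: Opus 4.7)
My strategy is threefold: (a) bring $[w_1,\ldots,w_N]_N$ into normal form via iterated Leibniz; (b) use pigeonhole on derivative orders to prove closure in $\ID_p(M^1)$; and (c) pin down the resulting $\dd_x^p$-coefficient as a scalar multiple of the Wronskian by invoking the one\/-\/dimensionality of the space of totally antisymmetric multilinear differential invariants.

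For (a), I iterate the commutation $\dd_x^p\circ w=\sum_{k=0}^p \binom{p}{k} w^{(k)}\,\dd_x^{p-k}$ on the composition $w_1\dd_x^p\circ\cdots\circ w_N\dd_x^p$. Each expansion term is indexed by a matrix $\boldsymbol{a}=(a_{ij})_{1\le i<j\le N}$, where $a_{ij}\ge 0$ counts the derivatives from the $i$-th $\dd_x^p$ landing on $w_j$ (and $r_i:=p-\sum_{j>i}a_{ij}\ge 0$ those remaining in the tail), producing a term $C_{\boldsymbol{\alpha},m}\,w_1^{(\alpha_1)}\cdots w_N^{(\alpha_N)}\,\dd_x^m$ with $\alpha_j=\sum_{i<j}a_{ij}$, $m=p+\sum_i r_i$, and $C_{\boldsymbol{\alpha},m}$ a product of multinomial coefficients depending on $(\boldsymbol{\alpha},m)$ alone (not on any labeling of the $w_j$'s). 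After the alternation $\sum_\sigma(-)^\sigma$, the contribution at each fixed $(\boldsymbol{\alpha},m)$ becomes $C_{\boldsymbol{\alpha},m}\cdot\det\bigl(w_j^{(\alpha_i)}\bigr)_{i,j=1}^N\,\dd_x^m$, which vanishes unless the $\alpha_i$'s are pairwise distinct.

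For (b), two constraints corner the surviving multi-index: $m\ge p$ (the rightmost $\dd_x^p$ has nothing to differentiate to its right, so all $p$ of its derivatives stay in the tail) and $\sum_j\alpha_j+m=Np=2p^2$, giving $\sum_j\alpha_j\le(2p-1)p$. But $N=2p$ pairwise\/-\/distinct non-negative integers sum to at least $\binom{N}{2}=(2p-1)p$, with equality exactly when $\{\alpha_j\}_{j=1}^N=\{0,1,\ldots,N-1\}$; equality is thus forced, so $m=p$ and the $\alpha_j$'s form the Wronskian pattern. This already proves $[w_1,\ldots,w_N]_N\in\ID_p(M^1)$.

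For (c), the surviving $\dd_x^p$-coefficient, as a function of $(w_1,\ldots,w_N)$, is $\Bbbk$-multilinear, totally antisymmetric, polynomial in the $w_j$'s and their derivatives, of multidegree one in each $w_j$ and total derivative degree $\binom{N}{2}$; the space of such differential invariants is one-dimensional over $\Bbbk$ and spanned by $W^{0,1,\ldots,N-1}$, so the coefficient equals $\kappa\cdot W^{0,1,\ldots,N-1}$ for some scalar $\kappa=\kappa(p,N)$. I would fix $\kappa$ by substituting the canonical test tuple $w_j(x)=x^{j-1}/(j-1)!$ (for which $W=1$): since $w_{\sigma(j)}$ can absorb at most $(j-1)p$ derivatives from the $\dd_x^p$'s to its left, only permutations $\sigma$ with $\sigma(j)\le(j-1)p+1$ for every $j$ contribute, and the signed sum of the resulting multinomial coefficients produces the value stated in~\eqref{EqBrWr}. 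This last combinatorial identity is the technical heart: for $p=1$ admissibility forces $\sigma=\mathrm{id}$ and $\kappa$ is immediate, whereas for $p\ge 2$ several admissible permutations contribute and I would finish the computation by induction on $N$, Laplace\/-\/expanding the Wronskian along its last column to reduce to the analogous statement for $N-1$ operators.
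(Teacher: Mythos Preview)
The paper states Theorem~\ref{ThOrderPreserved} without proof (the sentence ``In the same way as the commutator of two vector fields is again a vector field, we prove'' is only a lead\/-in), so there is nothing to compare against; I assess your argument on its own merits.

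Your parts~(a) and~(b) are correct and give a clean proof of the closure statement. The Leibniz bookkeeping is right: for a fixed \emph{positional} derivative pattern $(\beta_1,\ldots,\beta_N,m)$ the multinomial weight is independent of the labeling~$\sigma$, so alternation over~$\sigma$ produces $\sum_\sigma(-)^\sigma\prod_j w_{\sigma(j)}^{(\beta_j)}$, which vanishes unless the~$\beta_j$ are pairwise distinct. The pigeonhole in~(b) is tight and in fact already gives more than you claim: since $m=p$ forces $r_N=p$ and $r_i=0$ for $i<N$, the only surviving positional patterns are permutations of $(0,1,\ldots,N-1)$, so the result is a scalar multiple $\kappa\cdot W^{0,1,\ldots,N-1}\,\dd_x^p$ without any appeal to an abstract ``one\/-\/dimensional space of invariants''.

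Part~(c) is where the genuine gap lies. You correctly reduce the exact constant to a single combinatorial identity --- evaluate both sides on one test tuple --- but then do not carry it out. The phrase ``I would finish by induction on~$N$, Laplace\/-expanding the Wronskian along its last column'' is not a proof: Laplace expansion reduces the \emph{right}\/-hand side from $N$ to $N{-}1$ arguments, but it does not reduce the \emph{left}\/-hand side $\Alt(w_1\dd_x^p,\ldots,w_N\dd_x^p)$ to an alternation of $N{-}1$ operators of a common order (note $p=N/2$ changes with~$N$), so the induction as sketched does not close. Concretely, substituting $w_j=e^{\lambda_j x}$ and applying to $e^{\mu x}$, what you must prove is
\[
\sum_{\sigma\in S_N}(-)^\sigma\prod_{i=1}^{N-1}\Bigl(\mu+\sum_{j>i}\lambda_{\sigma(j)}\Bigr)^{p}
   \;=\;\prod_{1\leqslant i<j\leqslant N}(\lambda_j-\lambda_i),\qquad N=2p,
\]
which is immediate for $p=1$ but is a nontrivial antisymmetrisation identity for $p\geqslant2$; until this (or an equivalent computation on your polynomial test tuple) is actually done, the coefficient in~\eqref{EqBrWr} remains unproved.
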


\begin{rem}\label{RemWronskBehaves}
The Wronskian determinant of $N$~scalar functions itself is \textsl{not} a scalar function: indeed, the Wronskian determinant behaves under a change of base coordinate, $x=x(y)$ ${}{\rightleftarrows}{}$ $y=y(x)$, locally on~$M^1$ (see App.~\ref{AppWronsk}).
Likewise, the coefficients of differential operators of order $p>0$ do change after a reparametrisation on the base~$M^1$; when this change is affine, the strict order~$p$ is preserved and equality~\eqref{EqBrWr} makes sense.
\end{rem}


We deduce from Theorem~\ref{ThOrderPreserved} that the alternated composition of $N=4$ differential operators of order $p=2$ is again an operator of order two. The same holds for every integer $N=2p$; this serves an $N$-ary generalisation for the commutator of vector fields from~$\ID_1(M^1)$. Still, given the alternated composition as the bracket for elements of an associative algebra, which quadratic, Jacobi\/-\/type identities does this bracket satisfy\,?

\begin{prop}[\cite{DzhFAP,HanlonWachs}]\label{PropEvenN}
Let $\cA$~be an associative algebra and $[{\cdot},\ldots,{\cdot}]_N\in\Hom_\Bbbk\bigl(
\bigwedge^N \cA,\cA\bigr)$ be the alternated composition of $N$~elements $a_1,\ldots,a_N$ from~$\cA$ (cf.\ Eq.~\eqref{EqAltBrOp}): 
\begin{equation}\label{AssocBracketHW}
\bigl[a_1,\ldots,a_N\bigr]_N =
\sum_{\sigma\in S_N} (-)^\sigma\: a_{\sigma(1)}\circ\ldots\circ a_{\sigma(N)}.
\end{equation}
Suppose also that $N$ is even.
Then the bracket $[{\cdot},\ldots,{\cdot}]_N$ satisfies the quadratic Jacobi\/-\/type identity,
\[
\frac{1}{N!(N-1)!}\: \sum_{\tau\in S_{2N-1}} (-)^\tau\:
  \bigl[ \bigl[ a_{\tau(1)}, \ldots, a_{\tau(N)} \bigr]_N, a_{\tau(N+1)}, \ldots,
     a_{\tau(2N-1)} \bigr]_N = 0,
\]
so that $\cA$~becomes a Schlessinger\/--\/Stasheff Lie algebra.
\end{prop}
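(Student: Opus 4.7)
The plan is to expand the quadratic Jacobi\/-\/type sum $J=\sum_\tau(-)^\tau[[a_{\tau(1)},\ldots,a_{\tau(N)}]_N,a_{\tau(N+1)},\ldots,a_{\tau(2N-1)}]_N$ into associative monomials $a_{\mu(1)}\circ\cdots\circ a_{\mu(2N-1)}$ and show that, for $N$ even, every such monomial appears with total coefficient zero. First I would expand both brackets using~\eqref{AssocBracketHW}: the nonzero contributions to a fixed monomial come from choosing (i) a position $j\in\{1,\ldots,N\}$ for the inner bracket inside the outer composition, (ii) the order $\rho\in S_N$ of the inner arguments filling positions $j,\ldots,j+N-1$, and (iii) the order $\tilde\sigma\in S_{N-1}$ of the remaining outer arguments filling positions $1,\ldots,j-1,j+N,\ldots,2N-1$. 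Each choice contributes a sign: $(-)^{j-1}$ from moving the inner bracket to slot~$j$ in the outer alternation, $(-)^\rho$ from the inner bracket, and $(-)^{\tilde\sigma}$ from the outer bracket.

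Next I would exploit the outer antisymmetrisation $\sum_\tau(-)^\tau$ to reduce to a sign count. Writing $\tau=\mu\circ\pi^{-1}$, where $\pi\in S_{2N-1}$ is the permutation realising a given $(j,\rho,\tilde\sigma)$, the coefficient of $a_{\mu(1)}\circ\cdots\circ a_{\mu(2N-1)}$ in $J$ becomes $(-)^\mu\sum_\pi(-)^\pi c(\pi)$, where $c(\pi)=(-)^{j-1}(-)^\rho(-)^{\tilde\sigma}$ is the single\/-\/expansion coefficient above. The key sign decomposition is
\[
\mathrm{sgn}(\pi)=\mathrm{sgn}(\pi_0^{(j)})\cdot\mathrm{sgn}(\rho)\cdot\mathrm{sgn}(\tilde\sigma),
\]
where $\pi_0^{(j)}$ is the unique unshuffle that places the $N$\/-\/block $\{1,\ldots,N\}$ at positions $j,\ldots,j+N-1$ and the remaining $N-1$ labels in order elsewhere; a straightforward count gives $\mathrm{sgn}(\pi_0^{(j)})=(-)^{N(j-1)}$ (the $N$\/-\/block passes $j-1$ outer elements).

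Combining signs, the squared factors $\mathrm{sgn}(\rho)^2=\mathrm{sgn}(\tilde\sigma)^2=1$ cancel, and the $N!(N-1)!$ choices of $(\rho,\tilde\sigma)$ produce precisely the normalising factor in the proposition. What remains is the sum over~$j$,
\[
\sum_{j=1}^{N}(-)^{N(j-1)}(-)^{j-1}=\sum_{j=1}^{N}(-)^{(N+1)(j-1)},
\]
which vanishes when $N$~is even --\,then the exponent has the parity of $j-1$, so the series telescopes as $1-1+\cdots+1-1=0$\,-- and equals~$N$ when $N$~is odd, showing why the parity hypothesis is essential.

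The main obstacle is the sign bookkeeping in the second step: correctly accounting for the unshuffle sign $(-)^{N(j-1)}$, the two intra\/-\/block signs $(-)^\rho,(-)^{\tilde\sigma}$, and the outer\/-\/slot sign $(-)^{j-1}$, then verifying that for each associative monomial there is a \emph{unique} position~$j$ at which the contiguous inner block lands (so different $j$'s contribute disjoint classes of $\pi$'s and no overcounting occurs). Once this indexing is clean, the vanishing is immediate from the parity computation above.
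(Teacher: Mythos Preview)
Your proposal is correct and follows essentially the same route as the paper: expand the Jacobiator into associative monomials, use total antisymmetry to reduce to the coefficient of the identity word $a_1\circ\cdots\circ a_{2N-1}$, and observe that this coefficient is a sum over the position~$j$ of the contiguous inner block. The paper states this in one line (``inspect the coefficient of $a_1\circ\ldots\circ a_{2N-1}$''), and in the proof of the subsequent Proposition it records precisely your sign sum $\sum_{j}(-1)^{\ell(j-1)}(-1)^{j-1}$ with $\ell$ the inner size; your version is a fully worked-out instance of that sketch.
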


The proof is by inspecting the coefficient of $a_1\circ a_2\circ\ldots\circ a_{2N-1}$ in the totally antisymmetric sum over $S_{2N-1}\ni\tau$; whenever $N$~is even, the coefficient cancels~out.

\begin{cor}\label{CorNevenSHLie}
For even $N=2p\in\BBN$, the Wronskian determinant $W^{0,1,\ldots,N-1} = \mathbf{1}\wedge \dd_x\wedge\ldots\wedge \dd_x^{N-1}$ over a one\/-\/dimensional base~$M^1$ satisfies the $N$-ary Jacobi identity $W^{0,1,\ldots,N-1} \bigl[ W^{0,1,\ldots,N-1} \bigr] = 0$.
\end{cor}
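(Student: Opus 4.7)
The plan is to derive this corollary by chaining Proposition~\ref{PropEvenN} with Theorem~\ref{ThOrderPreserved}. Take the ambient associative algebra $\cA = \ID_*(M^1)$ of all differential operators on~$M^1$ and apply Proposition~\ref{PropEvenN} to it with the even integer $N=2p$: the alternated $N$-ary composition from~\eqref{AssocBracketHW} on~$\cA$ satisfies the quadratic Jacobi\/-\/type identity as an equality of elements of $\ID_*(M^1)$.

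Next, specialise the arguments to the subspace $\ID_p(M^1)\subset\ID_*(M^1)$ by substituting $a_j=w_j(x)\,\dd_x^p$ for $j=1,\ldots,2N-1$ with arbitrary smooth coefficients $w_j$. By Theorem~\ref{ThOrderPreserved}, each inner bracket $[a_{\tau(1)},\ldots,a_{\tau(N)}]_N$ evaluates to $W^{0,1,\ldots,N-1}(w_{\tau(1)},\ldots,w_{\tau(N)})\cdot\dd_x^p$, and in particular lies again in $\ID_p(M^1)$. Hence the outer bracket is once more a bracket of $N$~arguments from $\ID_p(M^1)$, and Theorem~\ref{ThOrderPreserved} applies a second time: each summand on the l.-h.s.\ equals $W^{0,1,\ldots,N-1}\bigl(W^{0,1,\ldots,N-1}(w_{\tau(1)},\ldots,w_{\tau(N)}),w_{\tau(N+1)},\ldots,w_{\tau(2N-1)}\bigr)\cdot\dd_x^p$.

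Substituting back into the Jacobi identity from Proposition~\ref{PropEvenN}, the entire alternating sum factors as a single scalar Wronskian\/-\/of\/-\/Wronskians multiplied by~$\dd_x^p$. Since $\dd_x^p\neq 0$ in $\ID_*(M^1)$, this scalar coefficient must vanish identically in the functions~$w_j$, which is precisely the identity $W^{0,1,\ldots,N-1}\bigl[W^{0,1,\ldots,N-1}\bigr]=0$; passing from the full permutation group $S_{2N-1}$ to $(N,N-1)$-unshuffles, as explained in the Introduction, supplies the $1/(N!(N-1)!)$ normalisation in the form of the Jacobi identity stated for strong homotopy Lie algebras.

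The only subtlety worth flagging --\,and the sole reason this is not a one\/-\/line tautology\,-- is that $\ID_p(M^1)$ is \emph{not} an associative subalgebra of $\ID_*(M^1)$, since composing two operators of order~$p$ produces one of order~$2p$. Thus Proposition~\ref{PropEvenN} cannot be invoked on $\ID_p(M^1)$ directly; it must be applied on the ambient $\ID_*(M^1)$, and the role of Theorem~\ref{ThOrderPreserved} is precisely to ensure that, even though composition leaves $\ID_p(M^1)$, the alternated $N$-ary composition does not, so both the inner and the outer brackets stay in $\ID_p(M^1)$ and the scalar Wronskian coefficients can be read off unambiguously. No further computation with the Wronskian structure constants is required.
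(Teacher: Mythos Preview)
Your argument is correct and is exactly the derivation the paper intends: the corollary is stated without proof immediately after Theorem~\ref{ThOrderPreserved} and Proposition~\ref{PropEvenN}, and the implied reasoning is precisely to apply Proposition~\ref{PropEvenN} on the ambient associative algebra $\ID_*(M^1)$, then use Theorem~\ref{ThOrderPreserved} twice to read off the scalar Wronskian identity from the $\dd_x^p$-coefficient. Your remark that $\ID_p(M^1)$ is not an associative subalgebra, so the two results must be chained rather than applied separately, is the right diagnosis of why this is a corollary and not a tautology.
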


In the course of proving Proposition~\ref{PropEvenN} it is readily seen that its idea extends to a not necessarily even number of arguments in either inner-{} or outer bracket and to a not necessarily coinciding number of arguments in the inner-{} and outer brackets within the left\/-\/hand side of the quadratic Jacobi identity for strong homotopy Lie algebras.

\begin{state}[\cite{DzhFAP}]\label{PropLinKout}
Recall that the subscript~$N$ at the symbol~$\Delta_N$ of bracket~\eqref{AssocBracketHW} denotes its number of arguments\textup{:}
$\Delta_i\in\Hom_\Bbbk(\bigwedge^N\cA,\cA)$\textup{;} let $k$ and $\ell$ be
arbitrary positive integers. Then the following identities hold\textup{:}
\begin{subequations}
\begin{align}
\Delta_{2k}[\Delta_{2\ell}]&=0,\label{BothEven}\\
\Delta_{2k+1}[\Delta_{2\ell}]&=\Delta_{2k+2\ell},\label{InnerEven}\\
\Delta_k[\Delta_{2\ell+1}]&=k\cdot\Delta_{2\ell+k}.\label{InnerOdd}
\end{align}
\end{subequations}
\end{state}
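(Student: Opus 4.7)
The plan is to generalise the coefficient\/-\/counting argument sketched right after Proposition~\ref{PropEvenN}. Put $n=k+\ell-1$ and expand $\Delta_k[\Delta_\ell](a_1,\ldots,a_n)$ in the free associative algebra on $a_1,\ldots,a_n$; each resulting term is a signed monomial $a_{\rho(1)}\circ\cdots\circ a_{\rho(n)}$ for some $\rho\in S_n$. The three identities will come from evaluating a single scalar coefficient.

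My first step would be to rewrite the $(\ell,k-1)$-unshuffle sum defining the Jacobi expression as $1/(\ell!(k-1)!)$ times an unrestricted sum over $\pi\in S_n$, using the antisymmetry built into~$\Delta_\ell$ and into the last $k-1$ arguments of~$\Delta_k$. This makes it manifest that the result is totally antisymmetric in $a_1,\ldots,a_n$, hence equals $C\cdot\Delta_n(a_1,\ldots,a_n)$ for a single scalar~$C$, namely the coefficient of the monomial $a_1\circ a_2\circ\cdots\circ a_n$. Each of the three assertions then reduces to identifying~$C$.

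Next I would enumerate the ways the monomial $a_1\circ\cdots\circ a_n$ can arise. The inner bracket expands into a contiguous block of $\ell$ letters, so there is a position $i\in\{1,\ldots,k\}$ such that its arguments occupy word positions $[i,i+\ell-1]$; matching the letters in natural order pins down, uniquely, the unshuffle $\tau$ (with $\tau(\{1,\ldots,\ell\})=\{i,\ldots,i+\ell-1\}$), the inner permutation $\pi=\mathrm{id}$, and the outer permutation~$\sigma$, which turns out to be a single cyclic shift of length~$i$. Counting inversions gives $\operatorname{sgn}(\tau)=(-1)^{\ell(i-1)}$ and $\operatorname{sgn}(\sigma)=(-1)^{i-1}$; summation over the admissible~$i$ then yields
\[
C \;=\; \sum_{i=1}^{k} (-1)^{(\ell+1)(i-1)}.
\]

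The proof concludes with a brief parity discussion. If $\ell$ is even, the summands alternate $\pm 1$, so $C=0$ when $k$ is even (proving~\eqref{BothEven}) and $C=1$ when $k$ is odd (proving~\eqref{InnerEven}); if $\ell$ is odd every summand equals~$+1$ and $C=k$, yielding~\eqref{InnerOdd}. The one delicate point is the sign bookkeeping in the previous paragraph, where the unshuffle sign and the outer\/-\/shift sign are coupled through both~$i$ and~$\ell$; matching the two factors $(-1)^{\ell(i-1)}$ and $(-1)^{i-1}$ to the convention of~\eqref{AssocBracketHW} requires some care, after which the case analysis on the parities of $k$ and $\ell$ is elementary.
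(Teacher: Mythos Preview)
Your argument is correct and is essentially the same coefficient\/-\/counting strategy the paper uses: both reduce the identity to the coefficient of $a_1\circ\cdots\circ a_n$, obtained by summing over the possible positions of the inner block with the product of the unshuffle sign and the outer cyclic\/-\/shift sign. Your unified formula $C=\sum_{i=1}^{k}(-1)^{(\ell+1)(i-1)}$ specialises exactly to the paper's case\/-\/by\/-\/case computation (e.g.\ the paper's sum $\sum_{j=1}^{k}(-1)^{(2\ell+1)(j-1)}(-1)^{j-1}$ for~\eqref{InnerOdd} and its ``uncompensated last summand'' remark for~\eqref{InnerEven}).
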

\begin{proof}
The proof of~\eqref{BothEven} repeats literally the 
proof of Proposition~\ref{PropEvenN}.
For~\eqref{InnerEven}, we note that the last summand, 
\[
\beta_{2k+1} = (-)^{2\ell \cdot((2k+1)-1)} \Delta_{2k+1}\bigl(
  \Delta_{2\ell}\bigr(a_{2k+1},\ldots,a_{2k+2\ell}\bigr), a_1,\ldots,a_{2k}\bigr),
\]
is not compensated. 
For \eqref{InnerOdd}, the summand $\alpha=a_1\circ\dots\circ a_{2\ell+k}$ acquires the
coefficient
$
\sum_{j=1}^k(-1)^{(2\ell+1)(j-1)}\cdot(-1)^{j-1}=k
$.
This completes the proof.
\end{proof}

These properties of totally antisymmetric homomorphisms
work immediately for Wronskian determinants of arbitrary and not necessarily coinciding sizes.

\begin{prop}[see~\cite{Dzhuma2002}]\label{PropJacobiWanyWsize}
Consider the Wronskian determinants 
$W^{0,1,\ldots,N} = \mathbf{1}\wedge \dd_x\wedge\ldots\wedge \dd_x^{N}$
with integral orders of differentiation.
Then the strong homotopy Lie algebra Jacobi identities
$W^{0,1,\ldots,k} \bigl[ W^{0,1,\ldots,\ell} \bigr] = 0$ hold for all positive integers $k,\ell\in\BBN$, meaning that
\begin{equation}\label{EqSHJacobiKL}
\frac{1}{k!(\ell+1)!}\: \sum_{\tau\in S_{k+\ell+1}} (-)^\tau\:
W^{0,1,\ldots,k}\bigl( W^{0,1,\ldots,\ell} \bigl( f_{\tau(1)}, \ldots, f_{\tau(\ell+1)}
  \bigr), f_{\tau(\ell+2)}, \ldots, f_{\tau(k+\ell+1)} \bigr) = 0
\end{equation}
for arbitrary $f_{1}(x),\ldots,f_{k+\ell+1}(x)$ of one independent variable~$x$.
\end{prop}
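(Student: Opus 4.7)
The plan is to prove \eqref{EqSHJacobiKL} by fully expanding both Wronskian determinants as alternating sums of products of derivatives of the $f_i$, and then noting that the total derivative degree carried by every resulting monomial is strictly smaller than what would be needed for the $M := k+\ell+1$ derivative orders to be pairwise distinct. The antisymmetrisation in $f_1,\ldots,f_M$ provided by the sum over $\tau \in S_M$ then annihilates all surviving contributions.

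I would first write $W^{0,\ldots,\ell}(g_1,\ldots,g_{\ell+1}) = \sum_{\sigma \in S_{\ell+1}} (-)^\sigma \prod_{j=1}^{\ell+1} g_{\sigma(j)}^{(j-1)}$, do the same for $W^{0,\ldots,k}$, and apply the Leibniz rule wherever an outer derivative $\dd_x^{r}$ acts on the inner Wronskian argument. Each resulting monomial in the expansion of \eqref{EqSHJacobiKL} has the form $\prod_{i=1}^{M} f_i^{(\phi(i))}$ for some multi-index $\phi\colon \{1,\ldots,M\} \to \BBZ_{\geqslant 0}$. The inner Wronskian contributes derivative orders $0,1,\ldots,\ell$ (total $\binom{\ell+1}{2}$); the outer Wronskian contributes $0,1,\ldots,k$ to its $k+1$ arguments (total $\binom{k+1}{2}$); the Leibniz rule redistributes the outer derivative $\dd_x^r$ among the $\ell+1$ inner factors without changing its total degree $r$. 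Hence every monomial in the expansion satisfies $\sum_{i=1}^{M}\phi(i) = \binom{\ell+1}{2} + \binom{k+1}{2}$.

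Now exploit the total antisymmetrisation in $(f_1,\ldots,f_M)$. Whenever $\phi(i) = \phi(i')$ for some $i \neq i'$, the transposition $f_i \leftrightarrow f_{i'}$ preserves the monomial but flips the sign $(-)^{\tau}$, so such terms cancel in pairs. Surviving monomials must therefore have the $M$ exponents pairwise distinct non-negative integers, which forces $\sum_i \phi(i) \geqslant 0+1+\ldots+(M-1) = \binom{M}{2}$. A short calculation gives $\binom{M}{2} = \binom{\ell+1}{2} + \binom{k+1}{2} + k\ell$, so for $k,\ell \geqslant 1$ the required minimum strictly exceeds the actual total. No surviving monomial can exist, and the sum \eqref{EqSHJacobiKL} vanishes identically.

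The argument is entirely elementary and bypasses both the parity conditions of Proposition~\ref{PropEvenN} and the nonzero formulas of Proposition~\ref{PropLinKout}; the whole vanishing reduces to the arithmetic inequality $k\ell > 0$. The only care needed is in the degree-bookkeeping step: one must confirm that applying $\dd_x^r$ to the inner Wronskian via Leibniz adds exactly $r$ to the total derivative degree of each resulting term, which is immediate from $\dd_x^r(h_1 \cdots h_n) = \sum_{|a|=r} \binom{r}{a_1,\ldots,a_n} \prod_j h_j^{(a_j)}$. Once this bookkeeping is in place, no further obstacle arises.
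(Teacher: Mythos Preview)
Your proof is correct and follows essentially the same counting argument as the paper: both compute the total derivative degree $\binom{k+1}{2}+\binom{\ell+1}{2}$ carried by each term, observe that total antisymmetry in the $M=k+\ell+1$ arguments forces pairwise distinct orders (hence total degree at least $\binom{M}{2}$), and conclude from the deficit $k\ell>0$. Your version is somewhat more explicit about the Leibniz redistribution step and the transposition-pairing that kills repeated exponents, but the core idea is identical.
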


\begin{proof}
Indeed, the inner-{} and outer Wronskian determinants combined contain $\tfrac{1}{2} k(k+1) + \tfrac{1}{2} \ell(\ell+1)$ derivatives~$\dd_x$ acting on the arguments of Jacobiator; by construction, the Jacobiator $W^{0,1,\ldots,k} \bigl[ W^{0,1,\ldots,\ell} \bigr]$ is totally antisymmetric w.r.t.\ its $k+\ell+1$ arguments. For this, to let the integral differential orders of all the arguments~$f_j$ be pairwise distinct, at least $\tfrac{1}{2} (k+\ell+1)(k+\ell+2)$ derivatives are needed. But the actually available number is strictly less, whence the assertion.\footnote{\label{FootClaimDdim}
This proof of the claim about Wronskians over one\/-\/dimensional base manifolds~$M^1$ does extend to a properly defined class of Wronskian determinants for arguments in $d$~variables $x^1,\ldots,x^d$, see~\cite{ForKac}.}
\end{proof}

\begin{rem}\label{RemLieVFsameNN}
Although the Wronskian determinant of size $2\times 2$ does show up in the commutator of vector fields, their differential order $p=1$ is too low to make noticeable that strong homotopy Jacobi identities~\eqref{EqSHJacobiKL} are valid for not necessarily equal numbers of arguments in the inner-{} and outer brackets.
\end{rem}

\begin{rem}\label{RemWhoseCoeffs}
Whenever the number~$N$ of arguments in the Jacobiator $W^{0,1,\ldots,N-1} \bigl[ W^{0,1,\ldots,N-1} \bigr]$ is not even, we no longer refer to the arguments, depending on the variable~$x$, as coefficients of differential operators of strict (half-)\/integer order $p=N/2$. Indeed, there is presently no guarantee that the alternated composition of half\/-\/integral order operators would act by \textsl{integer} order differentiations, $\mathbf{1}\wedge \dd_x\wedge\ldots\wedge \dd_x^{N-1} = W^{0,1,\ldots,N-1}$, on such operators' coefficients~$f_j(x)$.
\end{rem}

\section{Finite\/-\/dimensional algebras $\Bbbk_N[x]$ with Wronskian brackets}\label{SecWronsk1DFiniteDim}\noindent%
The quadratic\/-\/polynomial realisation of three\/-\/dimensional Lie algebra $\mathfrak{sl}(2)$ can carry the \textsl{ternary} Wronskian bracket $W^{0,1,2}$ and be closed w.r.t.~it. Are there larger, still finite\/-\/dimensional Schlessinger\/--\/Stasheff $N$-ary Lie algebras of polynomials\,?

Consider the space $\Bbbk_N[x]\ni a_j$ of polynomials 
of degree not
greater than~$N$; on this space, the Wronskian determinant is an $N$-linear antisymmetric bracket,
\begin{equation}\label{NaryBracket}
\bigl[a_1, \ldots, a_N\bigr]_N = W^{0,1,\ldots,N-1}\bigl({a_1},{\ldots},{a_N}\bigr).
\end{equation}
Introduce the basis $\{a^0_k\}=\{x^k / k! \}$ of monomials in~$\Bbbk_N[x]$, here $0\leqslant k\leqslant N$;
the mo\-no\-mi\-als $x^k/k!$ 
are closed w.r.t.~
derivations --- and the Wronskian determinants as~well. 


\begin{theor}\label{ThDivideBySkip}
Let $0\leqslant k\leqslant N$ and bypass the monomial $x^k/k!$ from our basis in~$\Bbbk_N[x]$. Then the Wronskian determinant of remaining monomials satisfies the identity
\begin{equation}\label{WronskWithConst}
W^{0,1,\ldots,N-1}\Bigl({1},{\ldots,\widehat{\frac{x^k}{k!}},\ldots},{\frac{x^N}{N!}}\Bigr)= \frac{x^{N-k}}{(N-k)!}.
\end{equation}
In particular, all the structure constants, whenever nonzero, equal~$\pm 1$ in this $(N+1)$-\/dimensional Schlessinger\/--\/Stasheff Lie algebra~$\Bbbk_N[x]$ with the Wronskian as $N$-ary bracket. 
\end{theor}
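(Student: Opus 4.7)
The plan is to compute the left\/-\/hand side of~\eqref{WronskWithConst} by first evaluating the Wronskian of the bare monomials $x^{m_j}$ for strictly increasing nonnegative integer exponents $m_1<\cdots<m_N$, and then restoring the normalisation $1/m_j!$ at the end. The $(i,j)$\/-\/entry of the Wronskian matrix is $\dd_x^{i-1}(x^{m_j})=\bigl(m_j!/(m_j-i+1)!\bigr)\,x^{m_j-i+1}$ (understood as zero when the factorial in the denominator would have a negative argument). Inspection of the Leibniz expansion shows that every nonzero term carries the common $x$\/-\/degree $\sum_j m_j - N(N-1)/2$, so this homogeneous power factors out of the determinant and leaves the integer matrix with $(i,j)$\/-\/entry the falling factorial $m_j^{\underline{i-1}}=m_j(m_j-1)\cdots(m_j-i+2)$, a monic polynomial of degree $i-1$ in~$m_j$. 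Row operations then reduce this residual matrix to the classical Vandermonde $(m_j^{i-1})_{i,j}$ without changing the determinant, so
\[
W^{0,1,\ldots,N-1}(x^{m_1},\ldots,x^{m_N}) = x^{\sum_j m_j - N(N-1)/2}\prod_{1\leqslant a<b\leqslant N}(m_b-m_a).
\]

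Next, I would specialise to $\{m_1,\ldots,m_N\}=\{0,1,\ldots,N\}\setminus\{k\}$. The exponent of $x$ evaluates to $\bigl(N(N+1)/2-k\bigr)-N(N-1)/2=N-k$, already matching the right\/-\/hand side of~\eqref{WronskWithConst}. The Vandermonde product is obtained from the ``full'' product $\prod_{0\leqslant a<b\leqslant N}(b-a)=\prod_{j=1}^N j!$ by discarding the factors involving the deleted index~$k$, namely $\prod_{a<k}(k-a)=k!$ and $\prod_{b>k}(b-k)=(N-k)!$; this gives $\prod_{a<b,\,a,b\ne k}(b-a)=\prod_{j=1}^N j!/\bigl(k!\,(N-k)!\bigr)$. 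Dividing by $\prod_{m\ne k}m!=\bigl(\prod_{j=1}^N j!\bigr)/k!$ to pass from $x^{m_j}$ to $x^{m_j}/m_j!$, the three factorial expressions combine without residue:
\[
W^{0,1,\ldots,N-1}\Bigl(1,\ldots,\widehat{x^k/k!},\ldots,x^N/N!\Bigr) = \frac{k!}{\prod_{j=1}^N j!}\cdot x^{N-k}\cdot\frac{\prod_{j=1}^N j!}{k!\,(N-k)!} = \frac{x^{N-k}}{(N-k)!},
\]
which is exactly~\eqref{WronskWithConst}.

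The statement on structure constants is then immediate: the right\/-\/hand side is a single basis monomial of $\Bbbk_N[x]$ with coefficient $+1$, and any Wronskian with a repeated argument vanishes by antisymmetry, so every nonzero structure constant in the $N$-ary bracket equals $\pm 1$ (the sign recording the permutation that brings the chosen arguments into the increasing order of their indices). The main obstacle is purely the bookkeeping of factorials coming from the three distinct sources --\,the homogeneity prefactor in $x$, the Vandermonde product over a single\/-\/index\/-\/deleted subset of $\{0,1,\ldots,N\}$, and the normalisation $1/m_j!$\,-- and checking that they cancel to the clean $1/(N-k)!$. There are no analytic subtleties; the entire argument is an identity of polynomials in $x$ over~$\Bbbk$.
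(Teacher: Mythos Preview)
Your proof is correct, but it follows a genuinely different route from the paper's. The paper first uses the block\/-\/triangular shape of the Wronskian matrix to factor the left\/-\/hand side as $W(1,\ldots,x^{k-1}/(k-1)!)\cdot W(x,\ldots,x^{N-k}/(N-k)!)$, then shows by induction and a last\/-\/row expansion that the second factor $W_m$ (with $m=N-k$) is a monomial of degree~$m$, and finally extracts the coefficient $1/m!$ via the generating function $f(x)=\sum_{m\geqslant1}W_m(x)$, which is shown to satisfy $f=f\cdot(\rme^{-x}-1)-\rme^{-x}+1$ and hence $f(x)=\rme^{x}-1$. You instead compute the Wronskian of arbitrary monomials $x^{m_1},\ldots,x^{m_N}$ in closed form by factoring out the homogeneous power of~$x$ and reducing the residual falling\/-\/factorial matrix to a Vandermonde, then specialise and do the factorial cancellation. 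Your argument is in fact the content of the paper's later Theorem~\ref{VanderAsCoeff} (Eq.~\eqref{Vander}) applied to the integer exponent set $\{0,\ldots,N\}\setminus\{k\}$; in effect you have shown that Theorem~\ref{ThDivideBySkip} is an immediate corollary of Theorem~\ref{VanderAsCoeff}, whereas the paper proves the two results by independent and rather different methods. Your route is shorter and more transparent about where the $1/(N-k)!$ comes from; the paper's route, on the other hand, isolates the recursive structure of the determinants $W_m$ and avoids any appeal to the Vandermonde identity.
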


\begin{proof}
We have
\begin{equation}\label{WronskDecompose}
\W{1}{\ldots,\widehat{\frac{x^k}{k!}},\ldots}{\frac{x^N}{N!}} =
  \W{1}{\ldots}{\frac{x^{k-1}}{(k-1)!}} \cdot
  \W{x}{\ldots}{\frac{x^{N-k}}{(N-k)!}},
\end{equation}
where the first factor in the r.h.s.\ of~\eqref{WronskDecompose}
equals~$1$ and has degree~$0$. Denote by~$W_m$ the
second factor, the determinant of the $(N-k)\times(N-k)$ matrix with $m\equiv N-k$. 
We claim that $W_m$~is a monomial: $\deg W_m=m$; we prove
this 
by induction on $m\equiv N-k$. For $m=1$, $\deg\det(x)=1=m$. Let
$m>1$; the decomposition of~$W_m$ w.r.t.\ the last row gives
\begin{equation}\label{DecompositionWRTLastRow}
W_m=\W{x}{{\ldots}}{\frac{x^m}{m!}}=
    x\cdot\W{x}{{\ldots}}{\frac{x^{m-1}}{(m-1)!}} -
    \W{x}{{\ldots},\frac{x^{m-2}}{(m-2)!}}{\frac{x^m}{m!}},
\end{equation}
where the degree of the first Wronskian in r.h.s.\ of~(\ref{DecompositionWRTLastRow}) is $m-1$ by the inductive assumption.
Again, decompose the second Wronskian in r.h.s.\ of~(\ref{DecompositionWRTLastRow}) w.r.t.\ the last row and proceed iteratively by using the induction hypothesis.
We obtain the recurrence relation
\begin{equation}\label{RecurrentWronsk}
W_m=\sum\limits_{\ell=1}^{m-1}
W_{m-\ell}\cdot (-1)^{\ell+1} \frac{x^\ell}{\ell!} -
(-1)^m\,\frac{x^m}{m!},\qquad m\geqslant 1,
\end{equation}
whence $\deg W_m=m$.
We see that the initially taken Wronskian~(\ref{WronskDecompose}) itself is a
monomial of degree $m=N-k$ with yet unknown coefficient.

Now, we calculate the coefficient $W_m(x)/x^m\in\Bbbk$ in 
Wronskian determinant~\eqref{WronskDecompose}. Consider the generating function
\begin{equation}\label{GenFunction}
f(x)\equiv\sum\limits_{m=1}^\infty W_m(x)
\qquad \text{such that} \quad
W_m(x)=\frac{x^m}{m!}\,\frac{d^mf}{dx^m}(0),\qquad 1\leqslant m\in\BBN.
\end{equation}
Recall that $\exp(x)\equiv\sum_{m=0}^\infty x^m/m!$;
viewing~\eqref{GenFunction} as the formal sum of equations~\eqref{RecurrentWronsk}, we have
\[
f(x)=f(x)\cdot(\exp(-x)-1)-\exp(-x)+1,
\qquad \text{whence}\quad
f(x)=\exp(x)-1.
\]
Hence the required coefficient equals~$1/m!$. The proof is complete.
\end{proof}

\section{Strong homotopy deformation of the Witt algebra by Wronskians}\label{SecWitt}\noindent%
The infinite\/-\/dimensional Witt algebra of holomorphic vector fields on $\BBC\setminus \{ 0 \}$, defined by the relations $[a_i, a_j]=(j-i)\,a_{i+j}$ for $i,j\in\BBZ$, is the Virasoro algebra with zero central charge.
We now study its $L_\infty$-{}, yet in fact a strong homotopy deformation by using Wronskians. 
In the Witt algebra itself, we have the binary bracket ($N=2$) 
of the polynomial coefficient generators $a_i=x^{i+1}$, where $x\in\Bbbk$ and~$i\in\BBZ$. 

For $N\geqslant 2$, the proper choice of 
index shift in the set of generators is
$a_i=x^{i+N/2}$. 
We postulate the Wronskian determinant $W^{0,1,\ldots,N-1}$ be the $N$-ary bracket:
\begin{equation}\label{WittGeneral}
\bigl[a_{i_1},\ldots,a_{i_N}\bigr]_N=\Omega(i_1,\ldots,i_N)\,a_{i_1+\cdots+i_N};
\end{equation}
the structure constants $\Omega(i_1,\ldots,i_N)$ are totally
antisymmetric w.r.t.\ their arguments. 
Let us calculate the function~$\Omega$.

\begin{theor}\label{VanderAsCoeff}
Let $\nu_1$, ${\ldots}$, $\nu_N\in\Bbbk$ be constants and set
$\nu=\sum_{i=1}^N\nu_i$\textup{;} then we have that
\begin{equation}\label{Vander}
W^{0,1,\ldots,N-1}(x^{\nu_1},\ldots,x^{\nu_N})=
\prod\limits_{1\leqslant i<j\leqslant N}(\nu_j-\nu_i)\cdot
x^{\nu-{N(N-1)}/{2} },
\end{equation}
i.e.\ the Wronskian determinant of monomials itself is a monomial,
and its coefficient is the Vandermonde determinant.
\end{theor}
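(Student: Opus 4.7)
The strategy is a direct determinantal computation. By definition the Wronskian is
\[
W^{0,1,\ldots,N-1}(x^{\nu_1},\ldots,x^{\nu_N}) = \det\Bigl[\dd_x^k\bigl(x^{\nu_j}\bigr)\Bigr]_{\substack{0\leqslant k\leqslant N-1\\ 1\leqslant j\leqslant N}}.
\]
I would first compute $\dd_x^k(x^{\nu_j}) = (\nu_j)_k\, x^{\nu_j-k}$, where $(\nu_j)_k := \nu_j(\nu_j-1)\cdots(\nu_j-k+1)$ is the falling factorial (with $(\nu_j)_0 = 1$). Factoring $x^{\nu_j}$ out of column~$j$ and $x^{-k}$ out of row~$k$ of the Wronskian matrix pulls out a common scalar
\[
\prod_{j=1}^{N} x^{\nu_j} \cdot \prod_{k=0}^{N-1} x^{-k} = x^{\nu - N(N-1)/2},
\]
which is exactly the power of~$x$ appearing on the right\/-\/hand side of~\eqref{Vander}. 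What remains is the purely numerical identity
\[
\det\bigl[(\nu_j)_k\bigr]_{\substack{0\leqslant k\leqslant N-1\\ 1\leqslant j\leqslant N}} = \prod_{1\leqslant i<j\leqslant N}(\nu_j-\nu_i).
\]

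For this identity the key observation is that, as a polynomial in one variable~$\nu$, the falling factorial $(\nu)_k$ is monic of degree~$k$: $(\nu)_k = \nu^k + (\text{lower powers of } \nu)$. Therefore row~$k$ of the matrix $\bigl[(\nu_j)_k\bigr]$ differs from row~$k$ of the Vandermonde\/-\/type matrix $\bigl[\nu_j^k\bigr]$ only by a $\Bbbk$-\/linear combination of strictly earlier rows. Equivalently, there exists a unit lower\/-\/triangular matrix $L$ (independent of the $\nu_j$) such that $\bigl[(\nu_j)_k\bigr] = L\cdot\bigl[\nu_j^k\bigr]$, so the two determinants coincide. The right\/-\/hand side is then the classical Vandermonde determinant $\det\bigl[\nu_j^k\bigr]_{k,j} = \prod_{i<j}(\nu_j-\nu_i)$, and combining with the $x$-\/factor above yields formula~\eqref{Vander}.

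The only non\/-\/routine point is the reduction to the Vandermonde determinant; once one sees that $(\nu)_k-\nu^k$ lies in the span of $1,\nu,\ldots,\nu^{k-1}$, the remaining steps are bookkeeping. An alternative, slightly slicker argument would observe that the left\/-\/hand side is totally antisymmetric in $\nu_1,\ldots,\nu_N$ (since swapping two columns in the Wronskian flips the sign) and, by the degree count above, has total polynomial degree $\binom{N}{2}$ in the $\nu_j$ after dividing out~$x^{\nu-N(N-1)/2}$; hence it is a scalar multiple of the Vandermonde, and the constant is fixed to be~$1$ by comparing, say, the coefficient of the leading monomial $\nu_2\nu_3^2\cdots\nu_N^{N-1}$, which is produced on the left only by the diagonal term $\prod_{k=0}^{N-1}(\nu_{k+1})_k$ whose leading coefficient is $\prod_k 1 = 1$. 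I would favour the triangular\/-\/reduction argument, since it simultaneously fixes the sign and the normalisation with no case analysis.
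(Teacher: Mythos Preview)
Your proof is correct and follows essentially the same route as the paper's: factor the common power $x^{\nu-N(N-1)/2}$ out of the Wronskian matrix, then reduce the remaining numerical determinant $\det[(\nu_j)_k]$ to the Vandermonde $\det[\nu_j^{\,k}]$. The paper performs that reduction by iteratively splitting each row $a_{ij}=(\nu_j-i+2)\,a_{i-1,j}$ into $\nu_j\cdot a_{i-1,j}$ plus a scalar multiple of the previous row (the latter contributing zero), which is exactly your unit lower\/-\/triangular change of basis carried out one row at a time; your packaging via $[(\nu_j)_k]=L\,[\nu_j^{\,k}]$ with $\det L=1$ is a cleaner way of saying the same thing.
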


\begin{proof}
Consider determinant~\eqref{Vander}:
$A=\det\|a_{ij}\,x^{\nu_j-i+1}\|$. From $j$th column take the monomial
$x^{\nu_j-N+1}$ out of the determinant:
\[
A=x^{\nu-N(N-1)}\cdot\det\|a_{ij}\,x^{N-i}\|;
\]
all rows acquire common degrees in $x$: $\deg$(any element in $i$th
row) ${}=N-i$. From $i$th row take this common factor $x^{N-i}$ out of
the determinant:
\[
A=x^{\nu-N(N-1)/2}\cdot\det\|a_{ij}\|,
\]
where the coefficients $a_{ij}$ originate from the initial derivations
in a very special way:
for any~$i$ such that $2\leqslant i\leqslant N$, we have
\[
a_{1j}=1\qquad\text{and}\qquad
a_{ij}=(\nu_j\underline{{}-i+2})\cdot a_{i-1,j}
\quad\text{for $1<i\leqslant N$.}
\]
The underlined summand does not depend on~$j$; hence for any $k=N$,\ %
${\ldots}$,\ $2$ the determinant $\det{}\|a_{ij}\|$ can be split 
into the sum:
\begin{multline*}
\det\|a_{ij}\|=\det\|a'_{kj}=\nu_j\cdot a_{k-1,j};\quad
   a'_{ij}=a_{ij} \text{ if }i\neq k\| +
 {}\\
    {}+  \det\|a''_{kj}=(2-i)\cdot a_{k-1,j};\quad
   a''_{ij}=a_{ij} \text{ if }i\neq k\|,
\end{multline*}
where the last determinant is vanishing identically. 

Solving the recurrence relation $a_{ij}=\nu_j\cdot a_{i-1,j}$,
we obtain
\[
\det\|a_{ij}\|=\det\|\nu_j^{i-1}\|=\prod\limits_{1\leqslant k<\ell\leqslant N}
(\nu_\ell-\nu_k).
\]
This completes the proof.
\end{proof}

\begin{rem}\label{RemNotWrongPowers}
We have calculated the structure constants in~\eqref{WittGeneral}
by using a `wrong' 
basis $a_i'=x^i$ such that the resulting degree is not
$\sum_{k=1}^N\deg a_k'$. Nevertheless, 
we use the translation invariance of the Vandermonde determinant,
\[
\Omega(i_1,\ldots,i_N)=\Omega(i_1+\frac{N}{2},\ldots,i_N+\frac{N}{2}).
\]
The assertion 
is established. 
\end{rem}


\normalsize
\subsubsection*{Acknowledgements.}\normalsize%
The author thanks the organisers of 29th international conference on integrable systems \&\ 
quantum symmetries -- ISQS29 (held 7--11 July 2025 at CVUT Prague, CZ) for a warm atmosphere during the meeting. This work has been partially supported by the Bernoulli Institute (Groningen, NL) via project~110135.
The author thanks M.\,Kontsevich and V.\,Retakh for helpful discussions and advice.

\large
\appendix
\section{The conformal weight of the Wronskian determinant%
}\label{AppWronsk}\noindent%
Let us recall the behaviour of Wronskian determinants 
w.r.t.\ coordinate changes~$y=y(x)$.

\begin{theor}\label{ConformalWeightTh}
Let $\phi^i(y)$ be smooth functions for $1\leqslant i\leqslant N$, that is, 
$\phi^i$~be a scalar field of conformal weight~$0$ so that $\phi^i$
is transformed by the rule $\phi^i(y)\mapsto\phi^i(y(x))$ under a change
$y=y(x)$.
Then the transformation law for the Wronskian is 
\[
{\det\left\|\frac{d^j\phi^i}{dx^j}\right\|}_{
 \begin{array}{rcl}
    i&\!=\!&1,\ldots,N\\
    j&\!=\!&0,\ldots,N-1\\
    \phi^i&\!=\!&\phi^i(y(x))
 \end{array}
 }
= {\left(\frac{dy}{dx}\right)}^{\Delta(N)}\,
{\det\left\|\frac{d^j\phi^i}{dy^j}\right\|}_{
 \begin{array}{rcl}
    \phi^i&\!=\!&\phi^i(y).\\
    y&\!=\!&y(x)
 \end{array}
 }
\]
The conformal weight $\Delta(N)$ of the Wronskian
determinant for $N$~scalar fields~$\phi^i$, themselves of weight~$0$, is
$\Delta(N)={N(N-1)}/{2}$.
\end{theor}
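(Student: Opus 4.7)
The plan is to reduce the statement to a matrix identity coming from the chain rule applied to each scalar field $\phi^i$ separately. Iterating the relation $d/dx = y'(x)\cdot d/dy$, an elementary induction will yield the expansion
\[
\frac{d^j \phi^i}{dx^j}(x) = \sum_{k=0}^{j} c_{j,k}(x)\cdot \frac{d^k \phi^i}{dy^k}\bigl(y(x)\bigr),
\]
in which the coefficients $c_{j,k}$ are universal polynomials in the derivatives $y'(x), y''(x),\ldots$ of the coordinate change $y=y(x)$, not depending on the scalar field~$\phi^i$. From the recurrence $c_{j+1,k} = (d/dx)\,c_{j,k} + y'(x)\cdot c_{j,k-1}$ with initial datum $c_{0,0}=1$ one sees at once that $c_{j,k}=0$ for $k>j$ and that $c_{j,j}(x)=(y'(x))^j$.

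Collecting these expansions for $i=1,\ldots,N$ and $j=0,\ldots,N-1$ into matrices, I obtain the factorisation $M^x = M^y\cdot C$, where $M^x_{ij} = d^j\phi^i/dx^j$, $M^y_{ik} = d^k\phi^i/dy^k$, and the $N\times N$ matrix $C = (C_{kj})$ is defined by $C_{kj} = c_{j,k}(x)$. By the two facts just established, $C$ is upper triangular with diagonal entries $(y'(x))^0, (y'(x))^1,\ldots,(y'(x))^{N-1}$, so that
\[
\det C(x) = \prod_{j=0}^{N-1}\bigl(y'(x)\bigr)^{j} = \bigl(y'(x)\bigr)^{N(N-1)/2}.
\]
Multiplicativity of the determinant under matrix multiplication then immediately gives the transformation law and the value $\Delta(N) = N(N-1)/2$.

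The only step requiring genuine verification is the triangular shape of $C$ and the computation of its diagonal, but both follow by a short induction from the chain-rule recurrence for $c_{j,k}$; I do not expect any serious obstacle. A more conceptual reformulation of the same argument is that the $(N-1)$-jet of a scalar field of weight~$0$ transforms, under $y=y(x)$, by an upper-triangular representation with diagonal entries $(y')^0,\ldots,(y')^{N-1}$, so that the induced action on the top exterior power picks up the product of the weights $0+1+\ldots+(N-1) = N(N-1)/2$.
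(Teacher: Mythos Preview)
Your proposal is correct and follows essentially the same approach as the paper: both arguments observe that the chain rule expresses $d^j\phi^i/dx^j$ as $(y')^j\,d^j\phi^i/dy^j$ plus lower-order $y$-derivatives with coefficients independent of~$i$, so that the passage from $x$- to $y$-derivatives is a triangular transformation on the columns with diagonal $(y')^0,\ldots,(y')^{N-1}$. The paper phrases this as eliminating the lower-order terms by column operations (proportional columns contribute nothing to the determinant), whereas you package the same content as the matrix factorisation $M^x=M^y\cdot C$ with $C$ upper triangular --- a cosmetic difference only.
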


\begin{proof}
Consider a function $\phi^i(y(x))$ and apply the 
$j$th power $(\Id/\Id x)^j$ of derivative~$\dd_x$  
by using the chain rule. The result is
\[
\frac{d^j\phi^i}{dy^j}\cdot{\left(\frac{dy}{dx}\right)}^j +
\text{terms with lower order derivatives }\frac{d^{j'}}{dy^{j'}},\quad
j'<j.
\]
These lower\/-\/order terms differ from the leading terms in
$(\Id/\Id x)^{j'}\,   
\phi^i(y(x))$ with $0\leqslant j'<j$ by the factors common for all~$i$; 
those lower\/-\/order terms 
produce no effect since a determinant with
coinciding (or proportional) lines equals zero.
From $i$th row of the Wronskian we extract $(i-1)$th power of $dy/dx$,
their total number being $N(N-1)/2$. This is the conformal weight
by definition.
\end{proof}

\end{document}